\documentclass[11pt,a4paper]{article} 
\usepackage[utf8]{inputenc} 
\usepackage[T1]{fontenc} 
\usepackage{amsmath,amssymb,amsthm,amsfonts} 
\usepackage{mathtools} 
\usepackage{graphicx} 
\usepackage{xcolor} 
\usepackage{hyperref} 
\hypersetup{colorlinks=true, linkcolor=blue, citecolor=blue, urlcolor=blue}
\usepackage[nameinlink]{cleveref}
\usepackage{enumitem}
\usepackage{geometry}
\usepackage{algorithm}
\usepackage{algpseudocode}
\theoremstyle{plain} 
\newtheorem{theorem}{Theorem}[section] 
\newtheorem{proposition}[theorem]{Proposition} 
\newtheorem{lemma}[theorem]{Lemma} 
\newtheorem{corollary}[theorem]{Corollary} 
\theoremstyle{definition} 
\newtheorem{definition}[theorem]{Definition} 
\newtheorem{example}[theorem]{Example} 
\theoremstyle{remark} 
\newtheorem{remark}[theorem]{Remark}

\title{On the Number of Iterations in a Kruskal-Type Process} 
\author{Samuel G. Corregidor\footnote{Dpto. Tecnologías de la Información, Escuela Politécnica Superior, Universidad CEU San Pablo, Boadilla del Monte, 28668 Madrid. Email: \texttt{samuel.gutierrezcorregidor@ceu.es}}} 
\date{} 
\begin{document}

\maketitle 

\begin{abstract}
    Motivated by the Kruskal counting procedure, this paper investigates a combinatorial elimination process defined on a deck consisting of $m$ copies of each value $1,2,\ldots,n$. Starting from an arbitrary arrangement of the $mn$ cards, the process repeatedly removes a number of consecutive cards equal to the value of the first remaining card. This generates a sequence of elimination steps whose length determines $T$, representing the total number of iterations required to exhaust the deck. The main objective of this paper is to analyze an upper bound $\sigma(m,n)$ for $T$, valid for every initial configuration of the deck, and to show that this bound is attained by a specific configuration.
\end{abstract} 

\bigskip \noindent\textbf{Keywords:} Kruskal Count; Combinatorial Processes; Stopping Times; Renewal Processes; Discrete Dynamical Systems.

\section{Introduction}

In \cite{Lagarias2009}, a problem devised by Martin D. Kruskal is addressed. The problem can be formulated as follows: start with a shuffled deck. A spectator secretly chooses an initial number $k$. Then, $k$ cards are counted, the card reached becomes a key card, and the value of that card becomes the new number of steps. This process is repeated until the end of the deck is reached.

The so-called \emph{Kruskal Count} problem is a paradigmatic example of the convergence of trajectories in discrete random processes. Although it was originally popularized as a card trick by Martin D. Kruskal, subsequent studies have shown that the underlying mechanism can be rigorously interpreted by \emph{coupling} techniques in Markov chains \cite{Lagarias2001,Lagarias2009,Lindvall2002}.

The main feature of the process is that trajectories starting from different states have a high probability of meeting after a finite number of steps and then evolving identically afterwards. This phenomenon of probabilistic synchronization occurs naturally in various contexts of information theory \cite{CoverThomas2006,Dobrushin1967,IyengarSiegelWolf2011}, theoretical computer science \cite{BerstelPerrinReutenauer2010}, including the synchronization of variable-length codes \cite{GilbertMoore1959,Higgie1993,Titchener1997}, mechanisms of resynchronization in encoded sequences \cite{Banerjee2023,Dobrushin1967}, graph-based software tamper protection mechanisms \cite{Dedic2007}, synchronization of finite automata, directed graph structures \cite{Trahtman2008,Trahtman2009} and cryptographic algorithms based on pseudo-random walks \cite{CharlesGorenLauter2008,JaoMillerVenkatesan2009}.

In particular, a close conceptual analogy has been pointed out with Pollard's kangaroo method for the calculation of discrete logarithms, where the detection of collisions between random trajectories constitutes the central element for the efficient computation of the solution \cite{Pollard1978}. Likewise, the study of the number of steps required to complete these procedures is closely related to the theory of stochastic processes and \emph{stopping times} \cite{Cinlar2013,Durrett2019,Ross1996}.

In this paper, a combinatorial problem defined on a deck composed of $m$ copies of each of the numbers $1,\ldots, n$ is studied, so that the total number of the cards is $mn$. Given an initial arrangement of the cards, the process evolves through an iterative procedure of elimination. At each step, the value of the first remaining card is recorded, and that many consecutive cards are removed. Since the number of cards is finite, the process is repeated on the resulting deck until all cards have been removed.

In order to analyze this dynamic, for each initial state of the deck, which will be represented by an ordered sequence of the value of cards $\mathcal{C} =(C_1,\ldots,C_L)$, we associate another sequence $(X_t)$ of the values that intervene in the successive stages of elimination (see Definition \ref{d:sucesion}).

The main objective of this paper is to study the number of iterations $T$ needed to complete the entire process. In section 3, an initial state (\emph{maximal}) is defined for different values of $n$ (triangular or not triangular). A function $\sigma(m,n)$ is defined for these initial states to compute $T = \sigma(m,n)$ (see Corollary \ref{c:cota}). Indeed, for every initial state, this function is an upper bound, resulting from Theorem \ref{t:cotas} that $m\leq T \leq \sigma (m,n)$. 

\section{Preliminaries} 

\begin{definition}\label{d:sucesion}
    Let $L\in\mathbb{N}$ and $\mathcal{C} = (C_i)_{i=1} ^L$ be a finite sequence of $L$ positive integers, for which $C_k = 0$ for every $k > L$. The sequence $(X_t)$ is defined recursively as follows: 
    
    \[ \begin{cases}
        X_0 = 1 \\
        X_{t+1} = X_t + C_{X_t}, \qquad \text{for every } t\in\mathbb{N}
    \end{cases} \] 

    It is denoted: 
    
    \[T(\mathcal{C}) = \min\{ t\in\mathbb{N} : X_t > L \}\]
    
    For convenience, the numbers in the sequence $\mathcal{C}$ will be referred to as \emph{values}.
\end{definition}

\begin{example}\label{e:estados} 
    Suppose that the \emph{values} are given by a linear expression $C_k = ak+b $ for $a,b>0 $. In that case, by Definition \ref{d:sucesion}: 
    
    \[
        X_{t+1}=(a+1)X_t+b
    \]
    
    The above equation is a first-order non-homogeneous linear difference equation. Using the fixed point method (\cite{elaydi2005difference,kelley2001difference}), the equilibrium is obtained as $ X^\ast=-\frac{b}{a}$. If we define $Y_t=X_t-X^\ast$, then $Y_{t+1}=(a+1)Y_t$. Therefore, $Y_t=(a+1)^tY_0$ and consequently,
    
    \[ X_t=\frac{a+b}{a}(a+1)^t-\frac{b}{a}, \qquad \text{ for every }t \geq 0 \]
    
    To determine $T(\mathcal{C})$, we seek the smallest $t\in\mathbb{N}$ such that $ X_t>L$. By the above equation we obtain:
    
    \[ \frac{a+b}{a}(a+1)^t-\frac{b}{a}>L \] 
    
    An equivalent inequality is
    
    \[ (a+1)^t>\frac{aL+b}{a+b} \] 
    
    Thus,
    
    \[ T(\mathcal C) = \left\lfloor \log_{a+1} \left(\frac{aL+b}{a+b}\right) \right\rfloor + 1 \]
\end{example}

\begin{lemma}\label{l:sucesion}
    For every finite sequence $\mathcal{C} = (C_i)_{i=1} ^L$ of positive integer \emph{values}: 
    \[ 
        X_t = 1 + \sum _{j=0} ^{t-1} C_{X_j},\qquad \text{for every } t\geq 1.
    \] 
    In addition, $T(\mathcal{C}) = | \{ t\in\mathbb{N} : C_{X_t} > 0 \} |$.
\end{lemma}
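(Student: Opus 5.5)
The first identity follows by a telescoping argument, or equivalently by induction on $t$, using only the recursion of Definition \ref{d:sucesion}. For $t=1$ the recursion gives $X_1 = X_0 + C_{X_0} = 1 + C_{X_0}$, which is the claimed formula. Now assume $X_t = 1 + \sum_{j=0}^{t-1} C_{X_j}$. Then
\[
X_{t+1} = X_t + C_{X_t} = 1 + \sum_{j=0}^{t} C_{X_j}.
\]
This closes the induction. Nothing about positivity is needed here, since the convention $C_k=0$ for $k>L$ makes every term well defined.

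For the counting statement, I will first establish the monotonicity structure of $(X_t)$. As long as $X_t \le L$, the term $C_{X_t}$ is a positive integer, so $X_{t+1} > X_t$. Hence the sequence strictly increases while it stays in $\{1,\dots,L\}$, and since it starts at $X_0=1\le L$, it must exceed $L$ after finitely many steps. This shows $T=T(\mathcal C)$ is well defined.

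Once $X_t > L$ for some $t$, we have $C_{X_t}=0$, so $X_{t+1}=X_t$, and by induction the sequence is constant from then on. In particular $X_t > L$ for all $t \ge T$. Together with minimality of $T$, which gives $X_t \le L$ for $t<T$, this yields the dichotomy
\[
C_{X_t} > 0 \iff X_t \le L \iff t < T,
\]
where the first equivalence uses that the values are positive and vanish beyond $L$.

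Therefore $\{t\in\mathbb N : C_{X_t}>0\} = \{0,1,\dots,T-1\}$, which has exactly $T$ elements. I am adopting the convention $0\in\mathbb N$, which is implicit in the definition since $X_0$ is indexed by $0$. The only delicate point is this bookkeeping of the index set: one must confirm that $t=0$ is counted, otherwise the cardinality would be $T-1$. Confirming it requires checking that $X_0=1\le L$, i.e. that $L\ge 1$. Everything else is routine.
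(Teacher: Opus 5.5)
Your proof is correct and follows the same route as the paper: induction on $t$ for the summation formula, then identifying $\{t\in\mathbb{N} : C_{X_t}>0\}$ with $\{0,\ldots,T(\mathcal{C})-1\}$. Your observation that $(X_t)$ is constant once it exceeds $L$ (so $X_t\le L$ forces $t<T(\mathcal{C})$) supplies the justification the paper leaves implicit in its converse inclusion, and your check that $T(\mathcal{C})$ is finite likewise covers a point the paper does not address.
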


\begin{proof}
    The proof proceeds by induction on $t$. For the base case $t = 1$, we have $X_1 = X_0 + C_{X_0} = 1 + C_{X_0}$. Assume that the result holds for $t$. Then,
    
    \[
        X_{t+1} = X_t + C_{X_t} = 1 + \sum_{j=0} ^{t-1} C_{X_j} + C_{X_t} = 1 +  \sum_{j=0} ^{t} C_{X_j}
    \] 
    
   In addition, if $0\leq t < T(\mathcal{C})$, then $X_t \leq L$ and consequently, $C_{X_t} > 0 $. Therefore, $\{ 0, \ldots T(\mathcal{C}) - 1 \} \subseteq \{ t\in\mathbb{N} : C_{X_t} > 0 \}$.

   Conversely, if $C_{X_t} > 0$, then $X_t \leq L$ and consequently, $0\leq t < T(\mathcal{C})$. Finally, we obtain 
   
   \[
        \{ 0, \ldots T(\mathcal{C}) - 1 \} = \{ t\in\mathbb{N} : C_{X_t} > 0 \}
    \]
   
   Since $|\{ 0, \ldots T(\mathcal{C}) - 1 \}| = T(\mathcal{C})$, then $T(\mathcal{C}) = | \{ t\in\mathbb{N} : C_{X_t} > 0 \} |$.
\end{proof}

\begin{lemma}\label{l:sucesion_dos} For every finite sequence $\mathcal{C} = (C_i)_{i=1} ^L$ of positive integer \emph{values}: 

\[ 
    X_t = X_{t'} + \sum _{j=t'} ^{t-1} C_{X_j}, \qquad \text{for every } t>t'\geq 0.
\]
\end{lemma}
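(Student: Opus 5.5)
The plan is to derive the statement directly from Lemma \ref{l:sucesion}, by subtracting the two closed-form expressions it provides. An alternative induction on $t$ is mentioned at the end as a check.

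Fix $t > t' \geq 0$ and treat the case $t' \geq 1$ first. By Lemma \ref{l:sucesion}, both $X_t = 1 + \sum_{j=0}^{t-1} C_{X_j}$ and $X_{t'} = 1 + \sum_{j=0}^{t'-1} C_{X_j}$ hold. Since $t-1 \geq t'$, the first sum splits as $\sum_{j=0}^{t'-1} C_{X_j} + \sum_{j=t'}^{t-1} C_{X_j}$. Substituting the expression for $X_{t'}$ then gives $X_t = X_{t'} + \sum_{j=t'}^{t-1} C_{X_j}$, which is the claim.

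The case $t'=0$ needs separate handling, because Lemma \ref{l:sucesion} is stated only for $t \geq 1$. Here $X_{t'} = X_0 = 1$, so the claim reads $X_t = 1 + \sum_{j=0}^{t-1} C_{X_j}$. This is exactly Lemma \ref{l:sucesion} applied to $t \geq 1$.

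As an independent check, one can induct on $t$ for fixed $t'$. The base case $t = t'+1$ is just the recursion $X_{t'+1} = X_{t'} + C_{X_{t'}}$ from Definition \ref{d:sucesion}. The step from $t$ to $t+1$ adds $C_{X_t}$ to both sides, using $X_{t+1} = X_t + C_{X_t}$. There is no real obstacle in either route; the only care needed is the index bookkeeping, namely the split of the sum at $t'$ and the boundary case $t'=0$. Neither argument uses positivity of the values, and both remain valid after the process has ended, when the relevant $C_{X_j}$ are $0$.
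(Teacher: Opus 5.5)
Your proposal is correct and follows the paper's proof: you apply Lemma \ref{l:sucesion} to both $X_t$ and $X_{t'}$ and take the difference, which is the same as splitting the sum at $t'$. Your separate handling of $t'=0$, where $X_0=1$ corresponds to the empty sum, is a small bit of bookkeeping that the paper leaves implicit.
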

\begin{proof}
    Applying Lemma \ref{l:sucesion} twice yields: 
    
    \[
        X_t = 1 + \sum _{j=0} ^{t-1} C_{X_j}, \qquad X_{t'} = 1 + \sum _{j=0} ^{t'-1} C_{X_j} 
    \] 
        
    Therefore, subtracting the two expressions, we obtain
    
    \[
        X_t - X_{t'} = \sum _{j=t'} ^{t-1} C_{X_j}
    \]
\end{proof}

\begin{remark}
    Lemma \ref{l:sucesion} ensures that $T(\mathcal{C})$ counts the number of positive \emph{values} appearing in the sequence $(X_t)_{t = 0} ^\infty$.
\end{remark}

In this paper, we study the sequence $\mathcal{C}$ under specific restrictions on its \emph{values}. To this end, we introduce the following definition.

\begin{definition}\label{d:inicial}
    Let $m,n\in\mathbb{N}$ be fixed. We will say that $\mathcal{C}_n^m = (C_i)_{i=1} ^L $ is an initial state whenever the following conditions hold: 
    \begin{enumerate}
        \item $L = mn$.
        \item $C_i \in \{1,\ldots , n \}$ for every $i = 1, \ldots , mn$.
        \item $| \{ i : C_i = j \} | = m$ for every $j = 1, \ldots , n$.
    \end{enumerate}
\end{definition}

\begin{example} 
    An initial state is simply a permutation of the multi set
    
    \[
        M = \{ 1^{(m)} , \ldots , n^{(m)} \}
    \]
    
    where each element $i^{(m)}$ has multiplicity $m$. For example, when $m=n=3$, a possible initial state is
    
    \[
        \mathcal{C}_3 ^3 = (1,2,3,1,2,3,1,2,3)
    \]
    
    In this case, $T(\mathcal{C}_3 ^3) = 6$.
\end{example}

\section{Main Results}

A natural number $n$ is called \emph{triangular} if $8n+1$ is an odd perfect square. Equivalently, there exists $k\in\mathbb N$ such that $8n+1=(2k+1)^2$.

\begin{remark}
    The above condition is equivalent to the classical representation
    
    \[
        n=\frac{k(k+1)}2
    \]
    
    for some $k\in\mathbb N$. 
\end{remark}

\begin{definition}\label{d:maximal_triangular}
    Let $m,n,k\in\mathbb{N}$. An initial state $\hat{\mathcal{C}}_n ^m = (C_i) _{i=1} ^L$ is said to be maximal triangular whenever the following conditions hold:
    \begin{enumerate}
        \item $L = mn$.
        \item $8n+1 = (2k+1)^2$.
        \item $C_{1+m\frac{(i-1)i}{2} + ij} = i$ for every $i\in\{ 1,\ldots , k \}$ and every $j\in\{ 0,\ldots , m-1 \}$.
    \end{enumerate}
\end{definition}

\begin{remark}\label{r:bien_def}
    It is straightforward to verify that the three conditions of Definition \ref{d:inicial} hold. 
    
    In addition, if we consider the function $s(i,j) = 1 + m\frac{(i-1)i}{2} + ij$ defined on the set of pairs $(i,j)$ satisfying $ (1,0) \leq _{lex} (i,j) \leq_{lex} (k,m-1) $, where $\leq _{lex}$ denotes the lexicographic order. Then $s$ is strictly increasing with respect to $\leq _{lex}$. 
    
    To see this, suppose that $(i,j) <_{lex} (u,v)$ and we distinguish between two cases:
    
    \begin{enumerate}
        \item If $i = u$ and $j < v$, then
        \[
            s(u,v) - s(i,j) = i(v-j) > 0
        \]
        
        \item If $i < u$, then
        \[ 
            \begin{aligned}
                s(i,j) & = 1 + m\frac{(i-1)i}{2} + ij \leq 1 + m\frac{(i-1)i}{2} +i(m-1) = 1 + m\frac{i(i+1)}{2} - i\\
                & < 1 + m\frac{i(i+1)}{2} < 1 + m\frac{i(i+1)}{2} + (i+1)v = s(i+1,v)
            \end{aligned} 
        \]
    \end{enumerate}

    Therefore, all subscripts are pairwise distinct, and furthermore,
    
    \begin{enumerate}
        \item $s(1,0) = 1$.
        \item $s(k,m-1) = 1 + m \frac{(k-1)k}{2} + k(m-1) = mn$.
    \end{enumerate}
\end{remark}

\begin{example}\label{e:maximal_triangular}
    To facilitate the presentation of examples of maximal triangular states, we describe a state in terms of successive blocks $\mathcal{C} = (B_1,\ldots , B_k)$. For example,
    \begin{enumerate}
        \item[$B_1:$] $(1, 1,$
        \item[$B_2:$] $2, 5, 2, 5,$
        \item[$B_3:$] $3, 6, 7, 3, 6, 7,$
        \item[$B_4:$] $4, 8, 9, 10, 4, 8, 9, 10)$
    \end{enumerate}
    It is a maximal triangular state for $m = 2$, $n = 10$ and $k = 4$. In this case, $T(\mathcal{C}) = mk = 8$.
\end{example}

\begin{lemma}\label{l:valores} 
    Let $m,n \in \mathbb{N}$, with $n$ triangular, and let $\hat{\mathcal{C}}_n ^m = (C_i)_{i=1} ^L $ be a maximal triangular state. Then $C_{X_{(i-1)m+j}} = i$ for every $j\in\{ 0,\ldots,m-1 \}$ and every $i\in\{ 1,\ldots,k \}$ with $k = \frac{1}{2}m(\sqrt{8n+1}-1)$.
\end{lemma}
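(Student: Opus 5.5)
Reading Definition~\ref{d:maximal_triangular}, the index $k$ in the statement should be $k=\frac12(\sqrt{8n+1}-1)$, the integer with $8n+1=(2k+1)^2$. The factor $m$ appears to be a typo, since the statement concerns $X_{(i-1)m+j}$ with $i\le k$. With that reading, the plan is to prove the stronger identity
\[
X_{(i-1)m+j} = s(i,j) = 1 + m\frac{(i-1)i}{2} + ij
\]
for every pair $(1,0)\le_{lex}(i,j)\le_{lex}(k,m-1)$. The claim then follows at once from condition~3 of Definition~\ref{d:maximal_triangular}, because $C_{s(i,j)} = i$.

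I would prove the identity by induction along the lexicographic order, which corresponds to induction on $t=(i-1)m+j$ from $0$ to $km-1$. The base case is $X_0 = 1 = s(1,0)$, by Definition~\ref{d:sucesion} and Remark~\ref{r:bien_def}. For the inductive step, assume $X_{(i-1)m+j} = s(i,j)$. By Definition~\ref{d:sucesion} and the hypothesis,
\[
X_{(i-1)m+j+1} = s(i,j) + C_{s(i,j)} = s(i,j) + i.
\]
There are then two cases.

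\textbf{Within a block ($j<m-1$).} Here $s(i,j)+i = s(i,j+1)$ directly from the formula for $s$, and $(i-1)m+j+1$ is the index attached to $(i,j+1)$.

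\textbf{Across blocks ($j=m-1$, $i<k$).} Here
\[
s(i,m-1)+i = 1+m\frac{(i-1)i}{2}+im = 1+m\frac{i(i+1)}{2} = s(i+1,0),
\]
and $(i-1)m+m = im$ is the index attached to $(i+1,0)$.

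This closes the induction. All indices involved lie in $\{1,\dots,mn\}$ by Remark~\ref{r:bien_def}, so every $C$ used is a genuine card value, not the padding $0$.

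The only delicate point is the block transition, which amounts to the arithmetic identity $\frac{(i-1)i}{2}+i=\frac{i(i+1)}{2}$. One must also check that the induction stops exactly at $(k,m-1)$ and never requires $i>k$. I could alternatively frame the argument via Lemma~\ref{l:sucesion}, summing $C_{X_j}$ over completed blocks: $m(1+2+\dots+(i-1))+ij$. The direct induction is cleaner, though.
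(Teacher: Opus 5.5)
Your proof is correct and is essentially the paper's argument. Both establish the closed form $X_{(i-1)m+j} = 1 + m\frac{(i-1)i}{2} + ij$ by induction and then read off $C_{X_{(i-1)m+j}} = i$ from condition 3 of Definition \ref{d:maximal_triangular}; the paper runs a nested strong induction through the summation formula of Lemma \ref{l:sucesion}, while your single induction on the one-step recursion is slightly cleaner. Your reading $k = \frac12(\sqrt{8n+1}-1)$ is also the right one: with the extra factor $m$ the statement would fail for $m \ge 2$, since $X_{mk} = 1 + mn$ already lies past the end of the deck.
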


\begin{proof}
    The proof is by nested strong induction. For the outer induction, we proceed by strong induction on $i$, starting from the base case $i=1$. For each fixed $i$, an inner strong induction on $j$ is carried out.
    
    Let us fix $i = 1$. For the base case $j = 0$, we obtain $C_{X_{(i-1)m + j}} = C_{X_0} = 1$.
    
    Let $j\in\{1,\ldots , m-1\}$ and assume that $C_{X_{v}} = 1 $ for every $v\in\{0,\ldots, j-1\}$. Using Lemma \ref{l:sucesion} and the strong induction hypothesis, we obtain
    \[
        X_j = 1 + \sum _{v=0} ^{j-1} C_{X_v} = 1 + \sum _{v = 0} ^{j-1} 1 = j+1
    \]
    Therefore $C_{X_j} = C_{j+1} $ and by Definition \ref{d:maximal_triangular}, we obtain $C_{X_j} = 1$.
    
    Let $i\in\{ 2,\ldots , k \}$ and $j\in\{1,\ldots , m-1\}$. Assume that $C_{X_{(u-1)m + j}} = u $ for every $u\in\{1,\ldots , i-1\}$. For the base case $j = 0$, by Lemma \ref{l:sucesion} we obtain
    \[
        X_{(i-1)m} = 1 + \sum_{r = 0} ^{(i-1)m-1} C_{X_r} = 1 + \sum_{u = 1} ^{i-1} \sum _{j=0} ^{m-1} C_{X_{(u-1)m + j}} 
    \]
    
    By the previous induction hypothesis, we obtain 
    \[
        X_{(i-1)m} = 1 + \sum_{u = 1} ^{i-1} \sum _{j=0} ^{m-1} u = 1 + m \frac{(i-1)i}{2} 
    \] 
    
    Thus, \[ C_{X_{(i-1)m + 0}} = C_{1 + m \frac{(i-1)i}{2} +0\cdot i} = i \] 
    
    For the final case, let us assume that $C_{X_{(i-1)m + v}} = i $ for every $v\in\{0,\ldots, j-1\}$, then using Lemma \ref{l:sucesion} and the previous induction hypothesis, we obtain
    \[ 
        \begin{aligned}
            X_{(i-1)m+j} & = 1 + \sum_{r = 0} ^{(i-1)m+j-1} C_{X_r} = 1 + \sum_{r = 0} ^{(i-1)m-1} C_{X_r} + \sum_{r = (i-1)m} ^{(i-1)m+j-1} C_{X_r} \\
            & = 1 + m \frac{(i-1)i}{2} + \sum_{r = (i-1)m} ^{(i-1)m+j-1} i = 1 + m \frac{(i-1)i}{2} + ij
        \end{aligned}
    \] 
    
    Finally, we conclude that \[C_{X_{(i-1)m + j}} = C_{1 + m \frac{(i-1)i}{2} + ij} = i \]
\end{proof}

\begin{proposition}\label{p:cota}
    For every maximal triangular initial state $\hat{\mathcal{C}}_n^{m} $,
    \[
        T(\hat{\mathcal{C}}_n^{m}) = \frac{1}{2} m (\sqrt{8n+1} -1)
    \]
\end{proposition}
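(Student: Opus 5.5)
Note first that $8n+1=(2k+1)^2$ gives $\sqrt{8n+1}-1=2k$. The claimed value is therefore $\frac12 m(\sqrt{8n+1}-1)=mk$, and the plan is to prove $T(\hat{\mathcal{C}}_n^m)=mk$. The "$k$" in the statement of Lemma \ref{l:valores} should be read as the $k$ of Definition \ref{d:maximal_triangular}, i.e.\ $\frac12(\sqrt{8n+1}-1)$. With that reading, the Lemma gives $C_{X_{(i-1)m+j}}=i$ for $1\le i\le k$ and $0\le j\le m-1$.

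By Lemma \ref{l:sucesion}, $T(\mathcal C)=|\{t\in\mathbb N: C_{X_t}>0\}|$, and this set is the initial segment $\{0,\dots,T-1\}$. The map $(i,j)\mapsto (i-1)m+j$ is a bijection from $\{1,\dots,k\}\times\{0,\dots,m-1\}$ onto $\{0,\dots,mk-1\}$. Lemma \ref{l:valores} then says $C_{X_t}\ge 1>0$ for every $t\le mk-1$, which yields $T\ge mk$.

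For the reverse inequality, compute $X_{mk}$ with Lemma \ref{l:sucesion}:
\[
X_{mk}=1+\sum_{i=1}^{k}\sum_{j=0}^{m-1} C_{X_{(i-1)m+j}}=1+m\sum_{i=1}^k i = 1+m\frac{k(k+1)}{2}=1+mn>L .
\]
Hence $C_{X_{mk}}=0$, so $mk\notin\{t: C_{X_t}>0\}$. Since this set is an initial segment, $T\le mk$, and combining the two bounds gives $T=mk=\frac12 m(\sqrt{8n+1}-1)$. One can also argue directly from Definition \ref{d:sucesion}: $X_t\le L$ for all $t<mk$ (otherwise the corresponding $C_{X_t}$ would be $0$), while $X_{mk}>L$, so the minimum defining $T$ is exactly $mk$.

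The only delicate point is the notational one: the inconsistent "$k$" in Lemma \ref{l:valores}, which must be the triangular index and not $mk$. The rest is direct bookkeeping with the triangular-number identity $n=k(k+1)/2$.
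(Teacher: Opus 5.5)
Your proof is correct and follows essentially the same route as the paper: both compute $X_{mk}=1+mn>L$ from Lemma \ref{l:sucesion} and Lemma \ref{l:valores}, and you are right to read the $k$ in Lemma \ref{l:valores} as the triangular index. The only difference is the bound $T\ge mk$: you get it from $C_{X_t}>0$ for $t<mk$, whereas the paper checks $X_{mk-1}=1+mn-k$ directly. The paper writes this as $<mn$, which fails when $k=1$ (though $\le mn$ is all that is needed), so your version avoids that small slip.
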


\begin{proof}
    Let $m,n,k\in\mathbb{N}$ such that $8n+1=(2k+1)^2$ and $L = mn$. By Lemma \ref{l:sucesion} and Lemma \ref{l:valores}, we obtain
    \[
        \begin{aligned}
            X_{mk} & = 1 + \sum_{j=0} ^{mk-1} C_{X_j} = 1 + \sum _{i = 1} ^k \sum _{j = 0} ^{m-1} C_{X_{(i-1)m+j}} \\ 
            & = 1 + \sum _{i = 1} ^k \sum _{j = 0} ^{m-1} i = 1 + m\frac{k(k+1)}{2} = 1 + mn
        \end{aligned}
    \]
    
    Since $X_{mk-1} = 1 + mn - k< mn$, then $T(\hat{\mathcal{C}}_{n}^{m}) = mk = \frac{1}{2} m (\sqrt{8n+1} -1) $.
\end{proof} 

\begin{definition}\label{d:maximal_no} 
    Let $m\in\mathbb{N}$ and let $n\in\mathbb{N}$ be a not triangular number. An initial state $\hat{\mathcal{C}}_n ^m = (C_i) _{i=1} ^L$ is said to be maximal not triangular whenever the following conditions hold: \begin{enumerate}
    \item $k = \left\lfloor\frac{\sqrt{8n+1}-1}{2}\right\rfloor$, $A_k = \frac{k(k+1)}{2}$, $s = n-A_k$, $ m' = \left\lfloor\frac{m(s-1)}{k}\right\rfloor$, $a\in\{ 0,\ldots, k-1 \}$ such that $ m(s-1)  = m'k + a$ and $L = mn$.
    \item $C_{1+m\frac{(i-1)i}{2} + ij} = i$ for every $i\in\{ 1,\ldots , k \}$ and every $j\in\{ 0,\ldots , m-1 \}$.
    \item $C_{ mA_k +r(k+1) + 1} = k+1 $ for every $ r \in \{0,\ldots , m'\} $.
    \item $C_{ mA_k +m'(k+1) + a + t + 2} = k+1 $ for every integer $t$ such that $ 0 \leq t \leq m - m' - 2$.
\end{enumerate}

    An initial state is said to be maximal if it is maximal triangular or not triangular.
\end{definition}
\begin{remark} 
    When $s = 1$, we have $m' = 0$. Furthermore, if $m=1$, then $0\leq t \leq ,m-m'-2$ becomes $0 \leq t \leq -2$. However, these conditions are vacuous and do not generate any additional elements.

    As in Remark \ref{r:bien_def}, it suffices to check that the subscripts do not overlap and that they remain within the prescribed bounds.

    \begin{enumerate}
        \item For $i \in\{1,\ldots , k\}$ and $j \in \{ 0, \ldots , m-1 \} $, as shown in Remark \ref{r:bien_def}, the subscripts $s(i,j)$ are pairwise distinct. In the present setting, 
        \[ 
            s(k,m-1) = mA_k
        \]
        \item For $0\leq r \leq m'$, the sequence of subscripts $h(r) = mA_k + r(k+1) + 1$ is strictly increasing. Moreover,
        \begin{enumerate}
            \item $h(0) = mA_k + 1 > mA_k$.
            \item $h(m') = mA_k + m'(k+1) + 1$.
        \end{enumerate}
        \item For $0\leq t \leq m-m'-2$, the sequence of subscripts $g(t) = mA_k + m'(k+1) + a + t + 2$ is strictly increasing. Moreover,
        \begin{enumerate}
            \item $g(0) = mA_k + m'(k+1) + a + 2 > h(m')$.
            \item $ g(m-m'-2) = mA_k + m'(k+1) + a + m-m'-2 + 2 = mA_k + m'k + a + m = mA_k + m(s-1) + m = mA_k + ms = m(A_k + s) = mn$.
        \end{enumerate}
    \end{enumerate}
\end{remark}

\begin{example} 
    As in Example \ref{e:maximal_triangular}, we represent states by successive blocks,
    
    \[
        \mathcal{C} = (B_1,\ldots , B_k)
    \]
    
    We now consider several examples.
    
    \begin{enumerate}
    \item For $m = 2$ and $n = 7$, we have $k = 3$, and we can construct:\begin{enumerate}
        \item[$B_1:$] $(1, 1,$
        \item[$B_2:$] $2, 5, 2, 5,$
        \item[$B_3:$] $3, 6, 7, 3, 6, 7,$
        \item[$B_4:$] $4, 4)$
    \end{enumerate} 
    In this case, we obtain $T(\mathcal{C}) = mk + 1 = 7$.
    \item For $m = 5$ and $n = 7$, we have $k = 3$, and we can construct:
    \begin{enumerate}
        \item[$B_1:$] $(1, 1, 1, 1, 1$
        \item[$B_2:$] $2, 5, 2, 5, 2, 5, 2, 5, 2, 5,$
        \item[$B_3:$] $3, 6, 7, 3, 6, 7, 3, 6, 7, 3, 6, 7, 3, 6, 7,$
        \item[$B_4:$] $4, 4, 4, 4, 4)$
    \end{enumerate} 
    In this case, we obtain $T(\mathcal{C}) = mk + 2 = 17$.
    \item For $m = 3$ and $n = 9$, we have $k = 3$, and we can construct:
    \begin{enumerate}
        \item[$B_1:$] $(1, 1, 1, $
        \item[$B_2:$] $2, 5, 2, 5, 2, 5,$
        \item[$B_3:$] $3, 6, 7, 3, 6, 7, 3, 6, 7,$
        \item[$B_4:$] $4, 8, 8, 8, 4,9,9,9, 4)$
    \end{enumerate}
    In this case, we obtain $T(\mathcal{C}) = mk + 3 = 12$.
    \end{enumerate} 
\end{example}

\begin{theorem}\label{t:cota_no_triangular}
    For every maximal not triangular initial state $\hat{\mathcal{C}}_n^{m} $,
    \[
        T(\hat{\mathcal{C}}_n^{m}) = mk+m' + \left\lceil \frac{m-m'+a}{k+1} \right\rceil
    \] 
    Where $k = \left\lfloor\frac{\sqrt{8n+1}-1}{2}\right\rfloor$, $A_k = \frac{k(k+1)}{2}$, $s = n - A_k$, $m' = \left\lfloor\frac{m(s-1)}{k}\right\rfloor$ and $a\in\{0,\ldots,k-1\}$ such that $m(s-1)  = m'k + a$.
\end{theorem}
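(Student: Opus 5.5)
The plan is to follow the trajectory $(X_t)$ explicitly in three phases, as in Lemma \ref{l:valores} and Proposition \ref{p:cota}, and to read off $T$ through Lemma \ref{l:sucesion} and Lemma \ref{l:sucesion_dos}.

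\textbf{Phase 1 (the triangular part).} The proof of Lemma \ref{l:valores} only uses condition 2 of the definition, which is identical here (condition 2 of Definition \ref{d:maximal_no}). So the same nested induction gives $C_{X_{(i-1)m+j}}=i$ for $1\le i\le k$ and $0\le j\le m-1$. Exactly as in the proof of Proposition \ref{p:cota}, this yields
\[
X_{mk}=1+mA_k .
\]

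\textbf{Phase 2 (the spaced copies of $k+1$).} By condition 3 with $r=0$, $C_{X_{mk}}=k+1$. An induction on $r$ using Lemma \ref{l:sucesion_dos} then gives, for $0\le r\le m'$,
\[
X_{mk+r}=mA_k+r(k+1)+1=h(r),\qquad C_{X_{mk+r}}=k+1 .
\]
In particular $X_{mk+m'+1}=h(m')+k+1=:p_0$.

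\textbf{Phase 3 (the consecutive block).} The remark after Definition \ref{d:maximal_no} shows that the positions $g(0),\dots,g(m-m'-2)=mn$ are consecutive and all carry the value $k+1$. Moreover $g(0)=h(m')+a+1$. Since $a\le k-1$, we get
\[
p_0-g(0)=k-a\ge 1,
\]
so $p_0$ lies strictly beyond the start of the block. This matters because it means the jump never lands on one of the $a$ unspecified cells between $h(m')$ and $g(0)$, whose values are at least $k+2$. From $p_0$ onward, as long as $X_t\le mn$, the trajectory therefore moves in steps of exactly $k+1$ inside the block.

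The number of remaining positions is
\[
mn-p_0+1=m-m'+a-(k+1).
\]
Hence, if this quantity is positive, the number of further steps is $\lceil (m-m'+a-(k+1))/(k+1)\rceil=\lceil (m-m'+a)/(k+1)\rceil-1$. If instead $p_0>mn$, then $T=mk+m'+1$. This case fits the same formula because then $1\le m-m'+a\le k+1$, so the ceiling equals $1$. Positivity of $m-m'+a$ holds because $s\le k$ forces $m'<m$. Adding the three phases gives
\[
T=mk+(m'+1)+\Bigl\lceil \tfrac{m-m'+a}{k+1}\Bigr\rceil-1,
\]
which is the claimed formula.

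\textbf{Main obstacle.} The delicate step is the transition from Phase 2 to Phase 3. There one must verify that the jump from $h(m')$ lands inside the consecutive block of $(k+1)$'s, which is where $a\le k-1$ is used. One must also handle the degenerate cases uniformly: $s=1$, $m=1$, $m'=m-1$ (condition 4 vacuous), and $p_0>mn$. I would treat the case $p_0>mn$ separately and check that the ceiling formula still returns the correct value.
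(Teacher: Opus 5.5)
Your proposal is correct and follows essentially the same route as the paper. Your Phases 1 and 2 are the paper's Claim 1, and your Phase 3 is a streamlined version of the paper's Claims 2--4: the jump lands at $g(k-a)$ inside the consecutive block, you count $\lceil (mn-p_0+1)/(k+1)\rceil$ further steps, and the case $p_0>mn$ is treated separately. The paper reaches the same count by inducting on $t$ with $X_{mk+m'+t}=g\bigl((k+1)t-a-1\bigr)$ and bounding the endpoints.
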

\begin{proof}
    To prove the theorem, we establish the following four claims.
    
    \medskip
    \noindent\textbf{Claim 1.} $C_{X_{mk + r}} = k+1$ for every $r\in\{ 0, \ldots , m' \}$.

    We proceed by strong induction on $r$. For the base case $r = 0$, by Lemma \ref{l:sucesion} and Lemma \ref{l:valores} (since $(C_i) _{i=1} ^{L'}$ with $L' = mA_k$, is a maximal triangular state), we have
    \[ 
        X_{mk} = 1 + \sum _{r=0} ^{mk-1} C_{X_r} = 1 + \sum_{u = 1} ^{k} \sum _{v = 0} ^{m-1} C_{X_{(u-1)m + v}}  = 1 + \sum_{u = 1} ^{k} \sum _{v = 0} ^{m-1} u = 1 + m A_k 
    \] 
    By Definition \ref{d:maximal_no}, we obtain $ C_{X_{mk}} = C_{mA_k + 1} = k+1$.

    Assume that $C_{X_{mk+p}} = k+1$ for all $p\in\{ 0, \ldots , r - 1 \}$. Then, using Lemma \ref{l:sucesion_dos} and the induction hypothesis:
    \[
        X_{mk+r} = X_{mk} + \sum _{p = mk} ^{mk + r -1} C_{X_p} = 1 + mA_k + (k+1)r
    \]
    Hence, Condition 3 of the Definition \ref{d:maximal_no} implies that $C_{X_{mA_k + r(k+1)+1}} = k+1$.

    \medskip
    \noindent\textbf{Claim 2.}  If $  k+1 < m-m'+a $ and $t\in\left\lbrace 1,\ldots , \left\lceil \frac{m-m'+a}{k+1} \right\rceil -1 \right\rbrace$, then
    \[
        C_{X_{mk + m' + t}} = k+1 \quad \text{and} \quad X_{mk+m' + \left\lceil \frac{m-m'+a}{k+1} \right\rceil} > mn 
    \] 
    
    We proceed by strong induction on $t$. For the base case $t = 1$, applying the Definition \ref{d:sucesion} and Claim 1, we obtain 
    \[ 
        \begin{aligned}
            X_{mk+m'+1} & = X_{mk+m'} + C_{X_{mk+m'}} = 1+mA_k + (k+1)m' + (k+1) \\
            & = 1+ mA_k + (m'+1)(k+1)
        \end{aligned} 
    \]
    Since $k-a \leq m-m'-2$, Condition 4 of Definition \ref{d:maximal_no} yields 
    \[
        C_{X_{mk+m'+1}} = C_{mA_k+m'(k+1) + k+2} = C_{mA_k+m'(k+1) + a + (k-a) +2} = k+1 
    \] 
    
    Assume that $C_{X_{mk+m' + p}} = k+1 $ for all $p\in\{ 1,\ldots , t-1 \}$. Then, using Lemma \ref{l:sucesion_dos} and the induction hypothesis:
    \[ 
        \begin{aligned} 
            X_{mk + m' + t} & = X_{mk+m'} + \sum_{p = mk+m'} ^{mk+m'+t-1} C_{X_p} = 1 + mA_k + (k+1)m' + (k+1)t \\ 
            & = mA_k + m'(k+1) + a + ((k + 1)t - a - 1) + 2
        \end{aligned} 
    \] 

    Since $f(t) = (k+1)t -a -1$ is strictly increasing, it suffices to evaluate it at endpoints:
    \begin{enumerate}
        \item $f(1) = k-a\geq 0$.
        \item $f\left( \left\lceil \frac{m-m'+a}{k+1} \right\rceil-1 \right) =
         (k+1)\left\lceil \frac{m-m'+a}{k+1} \right\rceil - (k+1) -a -1
        \leq (m-m'+a) + (k+1) - 1 - (k+1) - a - 1 = m-m' - 2$
    \end{enumerate}  
    Therefore, if $ 1 \leq t \leq  \left\lceil \frac{m-m'+a}{k+1} \right\rceil - 1 $, then $0\leq (k + 1)t - a - 1 \leq m-m'-2$ and consequently, by Condition 4 of Definition \ref{d:maximal_no}:
    \[ 
        C_{X_{mk + m' + t}} = k+1
    \] 
    
    Furthermore, using Definition \ref{d:sucesion} together with the above expressions, we obtain 
    \[ 
        \begin{aligned}
            X_{mk + m' + \left\lceil \frac{m-m'+a}{k+1} \right\rceil} & = X_{mk + m' + \left\lceil \frac{m-m'+a}{k+1} \right\rceil - 1} + C_{X_{mk + m' + \left\lceil \frac{m-m'+a}{k+1} \right\rceil-1}} \\
            & = 1+ mA_k + \left( m' + \left\lceil \frac{m-m'+a}{k+1} \right\rceil\right) (k+1)\\
            & \geq  1 + mA_k + m' (k+1) + m-m'+a \\
            & = 1 + mA_k + m'k + m + a\\
            & = 1 + mA_k + m'k + m + m(s-1) - m'k \\
            & = 1 + mA_k +ms = 1 + mn > mn
        \end{aligned} 
    \]

    \medskip
    \noindent\textbf{Claim 3.} If $k+1 \geq m-m'+a$, then $X_{mk+m' + 1} > mn$. 
    
    Using Definition \ref{d:sucesion} and the expressions for $X_{mk+r}$ and $C_{X_{mk+r}}$ established in Claim 1, we obtain
    \[ 
        \begin{aligned}
            X_{mk+m'+1} & = X_{mk+m'} + C_{X_{mk+m'}} =  1 + mA_k + (k+1)m' + (k+1) \\
            & = 1 + mA_k + (m'+1)(k+1) = mA_k + m'k + m' + k + 2 \\
            & = mA_k + m(s-1) + m' + k + 2 = mA_k + ms - m + m' + k + 2 \\
            & = mA_k + ms + (k+1) - (m-m'+a) + a + 1 \geq mA_k + ms + 1 + a \\
            & > mA_k + ms = mn
        \end{aligned} 
    \]

    \medskip
    \noindent\textbf{Claim 4.}  $X_{mk + m' + \left\lceil \frac{m-m'+a}{k+1} \right\rceil - 1} \leq mn$.
    
    Using the expression derived in Claim 2, we obtain
    \[ 
        \begin{aligned}
            X_{mk + m' + \left\lceil \frac{m-m'+a}{k+1} \right\rceil - 1} & = 1 + mA_k + (k+1)\left(m'+\left\lceil \frac{m-m'+a}{k+1} \right\rceil-1\right) \\
            & < 1 + mA_k + (k+1)m' + (m-m'+a) \\
            & = 1 + mA_k + m(s-1) + m = mn+1
        \end{aligned} 
    \] 
    
    If $\left\lceil \frac{m-m'+a}{k+1} \right\rceil = 1$, then the same formula is obtained from Claim 1 with $r=m'$. Hence the previous computation remains valid in this boundary case. Since $X_t$ is an integer, \[ X_{mk + m' + \left\lceil \frac{m-m'+a}{k+1} \right\rceil - 1} \leq mn \]

    Combining Claims 2 and 3, we obtain
    \[
        X_{mk + m' + \left\lceil \frac{m-m'+a}{k+1} \right\rceil} > mn 
    \]
    
    Furthermore, Claim 4 together with Definition \ref{d:sucesion} yields
    \[ 
        T(\hat{\mathcal{C}}_n^{m}) = mk + m' + \left\lceil \frac{m-m'+a}{k+1} \right\rceil
    \]
\end{proof}

In order to simplify the expression appearing in Proposition \ref{p:cota} and Theorem \ref{t:cota_no_triangular}, we introduce the following notation.

\begin{definition}\label{d:T*}
    Given $m,n\in\mathbb{N}$, let $k = \left\lfloor\frac{\sqrt{8n+1}-1}{2}\right\rfloor$, $A_k = \frac{k(k+1)}{2}$, $s = n - A_k$, $m' = \left\lfloor\frac{m(s-1)}{k}\right\rfloor$ and $a\in\{0,\ldots,k-1\}$ such that $m(s-1)  = m'k + a$. Define
    \[ 
        \sigma (m,n) =
        \begin{cases}
            mk & \text{if } s = 0 \\
            mk+m' + \left\lceil \frac{m-m'+a}{k+1} \right\rceil  & \text{if } s > 0 \\
        \end{cases} 
    \]
\end{definition}

\begin{corollary}\label{c:cota}
    For every maximal initial state $\hat{\mathcal{C}}_n^{m} $,
    \[
        T(\hat{\mathcal{C}}_n^{m}) = \sigma(m,n)
    \]
\end{corollary}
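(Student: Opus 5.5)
The plan is to split according to whether $n$ is triangular, i.e.\ according to the value of $s = n - A_k$ in Definition \ref{d:T*}, and reduce each case to a result already established.

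\emph{First case, $s=0$.} We have $n = A_k = \frac{k(k+1)}{2}$, so $8n+1 = 4k^2+4k+1 = (2k+1)^2$ and $n$ is triangular. Conversely, if $n$ is triangular with $8n+1=(2k_0+1)^2$, then $\frac{\sqrt{8n+1}-1}{2} = k_0$ is an integer. Hence the floor in Definition \ref{d:T*} returns $k=k_0$, which gives $A_k=n$ and $s=0$.

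So $s=0$ holds exactly when $n$ is triangular. In that case a maximal state is, by Definition \ref{d:maximal_no}, a maximal triangular one. Proposition \ref{p:cota} then gives $T(\hat{\mathcal{C}}_n^m) = \frac12 m(\sqrt{8n+1}-1) = mk = \sigma(m,n)$. One detail needs care: the quantity named $k$ in Lemma \ref{l:valores} is really $mk$. I would simply use the final formula of Proposition \ref{p:cota} and rewrite $\sqrt{8n+1}=2k+1$.

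\emph{Second case, $s>0$.} Now $n$ is not triangular, so a maximal state is a maximal not triangular state. Definition \ref{d:T*} uses exactly the same $k$, $A_k$, $s$, $m'$, $a$ as Definition \ref{d:maximal_no} and Theorem \ref{t:cota_no_triangular}. The theorem therefore yields $T = mk + m' + \lceil \frac{m-m'+a}{k+1}\rceil = \sigma(m,n)$ directly.

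It remains to check that $s$ is never negative, so that the two cases are exhaustive. This follows from the definition of $k$ as a floor: $k \le \frac{\sqrt{8n+1}-1}{2}$ gives $(2k+1)^2 \le 8n+1$, hence $A_k \le n$.

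The only mild obstacle is this bookkeeping: confirming that $s=0$ exactly when $n$ is triangular, and that the two notions of maximal state partition the cases. Once that is settled, the corollary follows immediately from Proposition \ref{p:cota} and Theorem \ref{t:cota_no_triangular}.
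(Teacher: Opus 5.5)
Your proposal is correct and follows the paper's route: the paper gives no separate proof, and treats the corollary as the immediate combination of Proposition \ref{p:cota} (the case $s=0$) and Theorem \ref{t:cota_no_triangular} (the case $s>0$) through the notation of Definition \ref{d:T*}. Your extra bookkeeping makes explicit what the paper leaves implicit: that $s\ge 0$ always, that $s=0$ exactly when $n$ is triangular, and that the floor-defined $k$ then agrees with the $k$ of Definition \ref{d:maximal_triangular}.
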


\begin{example}
    Following the idea introduced in Example \ref{e:maximal_triangular} of describing initial states by successive blocks $\mathcal{C} = (B_1,\ldots , B_k)$. We consider the following examples. 
    \begin{enumerate}
        \item For $m = 2$ and $n = 3$, we have $k = 2$, and we can construct:
        \begin{enumerate}
            \item[$B_1:$] $(1, 1,$
            \item[$B_2:$] $2, 3, 2, 3)$
        \end{enumerate}
        In this case, we obtain $T(\mathcal{C}) = 4$.
        \item Nevertheless, there exist other configurations that achieve the same $T$. For example,
        \begin{enumerate}
            \item[$B_1:$] $(1,1$
            \item[$B_2:$] $2,3,3,2)$
        \end{enumerate} 
        For this configuration, we again obtain $T(\mathcal{C}) = 4$.
    \end{enumerate}
\end{example}

\begin{theorem}\label{t:cotas} 
    For every initial state $\mathcal{C}_n^m$, 
    \[
        m \leq T({\mathcal{C} _n^m}) \leq \sigma(m,n)
    \] 
\end{theorem}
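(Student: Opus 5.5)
The plan is to treat the two inequalities separately. Both come from Lemma \ref{l:sucesion}, together with the fact that the positions $X_0<X_1<\cdots<X_{T-1}$ visited by the process are distinct indices in $\{1,\ldots,mn\}$. Throughout, write $T=T(\mathcal{C}_n^m)$.

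\textbf{Lower bound.} By definition of $T$ we have $X_T>mn$. Lemma \ref{l:sucesion} then gives
\[
\sum_{j=0}^{T-1} C_{X_j}=X_T-1\geq mn .
\]
Each value satisfies $C_{X_j}\le n$ by condition 2 of Definition \ref{d:inicial}. Hence $Tn\ge mn$, i.e. $T\ge m$.

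\textbf{Upper bound: reduction to a counting problem.} By minimality of $T$ we have $X_{T-1}\le mn$. Lemma \ref{l:sucesion} then yields
\[
\sum_{j=0}^{T-2} C_{X_j}=X_{T-1}-1\le mn-1 .
\]
The sequence $(X_t)$ is strictly increasing because all values are positive. So the values $C_{X_0},\ldots,C_{X_{T-2}}$ are read at distinct positions. They therefore form a sub-multiset of $M=\{1^{(m)},\ldots,n^{(m)}\}$ of size $T-1$.

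Any sub-multiset of size $N$ has sum at least the sum $S(N)$ of the $N$ smallest elements of $M$. To see this, sort both multisets increasingly; the $i$-th smallest element of the sub-multiset is at least the $i$-th smallest element of $M$. Hence $S(T-1)\le mn-1$, and it remains to compute the largest $N$ with $S(N)\le mn-1$.

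\textbf{Computing the maximal $N$.} Let $k,A_k,s$ be as in Definition \ref{d:T*}, so that $0\le s\le k$. The $mk$ smallest elements of $M$ are $m$ copies each of $1,\ldots,k$, and their sum is $mA_k$.

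\emph{Case $s=0$.} Here $mA_k=mn$, so $S(mk)=mn>mn-1$. Therefore $T-1\le mk-1$, i.e. $T\le mk=\sigma(m,n)$.

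\emph{Case $s\ge1$.} Now $mA_k\le mn-1$, and the next elements of $M$ in increasing order are copies of $k+1$. Taking $q$ of them is allowed exactly when $q(k+1)\le ms-1$. The availability constraint $q\le m$ never binds, since $ms-1<m(k+1)$ because $s\le k$. Taking any element beyond these costs at least $k+1$ more and exceeds the budget. Hence
\[
T-1\le mk+\left\lfloor\frac{ms-1}{k+1}\right\rfloor,\qquad\text{so}\qquad T\le mk+\left\lceil\frac{ms}{k+1}\right\rceil .
\]
This uses the identity $1+\lfloor (ms-1)/(k+1)\rfloor=\lceil ms/(k+1)\rceil$.

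\textbf{Matching with $\sigma$ (the main obstacle).} The remaining step is to check that this expression equals $\sigma(m,n)$. Using $m(s-1)=m'k+a$ we get
\[
m'(k+1)+(m-m'+a)=m'k+a+m=ms .
\]
Since $m'$ is an integer, it follows that
\[
m'+\left\lceil\frac{m-m'+a}{k+1}\right\rceil=\left\lceil\frac{m'(k+1)+m-m'+a}{k+1}\right\rceil=\left\lceil\frac{ms}{k+1}\right\rceil .
\]
Thus the bound reads $T\le mk+\lceil ms/(k+1)\rceil=\sigma(m,n)$, which completes the upper bound. Corollary \ref{c:cota} shows it is attained.

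The delicate points are this identification of the formula in Definition \ref{d:T*} and the verification that the $m$-copy limit on $k+1$ is never reached. I would double-check the latter against the examples ($m=5,n=7$ and $m=3,n=9$), which give $17$ and $12$ respectively.
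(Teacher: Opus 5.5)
Your proof is correct, and it takes a different, more direct route than the paper.

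The paper encodes a trajectory by the counts $N_i$ of each value read and keeps both constraints $X_T>mn$ and $X_{T-1}\le mn$. It then uses an exchange argument (shifting a unit from the largest used value $l$ to $l-1$ to reduce the overshoot $R$) to show that an optimal count profile can be taken in greedy form: $N_i=m$ for $i<K$, $N_K=q$, $N_j=0$ for $j>K$. The paper never computes this optimum explicitly. Instead it identifies it with $\sigma(m,n)$ by noting that the trajectories of the maximal states in Proposition \ref{p:cota} and Theorem \ref{t:cota_no_triangular} have exactly such a profile.

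You drop the overshoot constraint, which is irrelevant for an upper bound. The one-line sorting inequality $\sum_{j\le T-2}C_{X_j}\ge S(T-1)$ then does the work of the whole exchange argument. You compute the threshold in closed form and verify $m'+\lceil (m-m'+a)/(k+1)\rceil=\lceil ms/(k+1)\rceil$ algebraically.

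What your route buys:
\begin{itemize}
\item The upper bound becomes independent of the extremal constructions. They are needed only for sharpness, which the statement does not assert.
\item It exposes the simpler formula $\sigma(m,n)=mk+\lceil ms/(k+1)\rceil$, which also covers the case $s=0$.
\end{itemize}
The paper's route, in exchange, ties the optimal profile to a concrete deck, so optimality and attainment come as one package.

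One small simplification: the check that the $m$-copy limit on $k+1$ never binds is needed only to show that $mk+q$, with $q=\lfloor (ms-1)/(k+1)\rfloor$, is exactly the largest admissible $N$. For the inequality itself, it suffices that every card beyond the $mk$ smallest is at least $k+1$. This gives $S(mk+q+1)\ge mA_k+(q+1)(k+1)\ge mA_k+ms=mn$.
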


\begin{proof}
    Using Lemma \ref{l:sucesion}, we have $X_{t} = 1 + \sum _{j=0} ^{t-1} C_{X_j}$. Since $C_{X_j} \in \{1,\ldots , n\}$ for every $j$, there exist integers $N_1, \ldots , N_n \in\{0,\ldots , m\}$ such that
    \[ 
        X_t = 1 + \sum _{i=1} ^n iN_i \qquad \text{and} \qquad t = \sum_{i=1}^n N_i
    \]
    
    Where $N_i$ denotes the number of occurrences of the \emph{value} $i$ among $C_{X_0},\ldots,C_{X_{t-1}}$. In particular, for $T = T(\mathcal{C} _n^m)$, the following additional constraints hold
    \[ 
        \begin{cases}
            X_T = 1 + \sum _{i=1} ^n iN_i > mn  \\
            X_{T-1} = 1 + \sum _{i=1} ^n iN_i -a\leq mn \text{ for some } a\in\{1,\ldots , n\} \text{ such that } N_a > 0
        \end{cases} 
    \]
    
    Therefore, the problem can be formulated as the following optimization problem: 
    \[ 
        \begin{array}{rl}
            \text{opt} & T = \sum_{i=1} ^n N_i   \\
            \text{s.a.} & N_i \in \{0,\ldots , m\} \\
            & \sum _{i = 1} ^n N_i i \geq mn \\
            & \sum _{i = 1} ^n N_i i - a < mn, \text{ for some } a\in\{1,\ldots , n\} \text{ such that } N_a > 0
        \end{array} 
    \]

    For the lower bound, it suffices to observe that $Tn = \sum_{i=1} ^n N_in \geq  \sum _{i = 1} ^n N_i i \geq mn$. Therefore, $T \geq m$.

    For the upper bound, let $ \hat{T} = \sum_{i=1} ^n \hat{N}_i$ be an optimal solution, and let $\hat{a}\in\{1,\ldots , n\} $ be such that $\hat{N}_{\hat{a}} > 0$ and $\sum _{i = 1} ^n \hat{N}_i i - \hat{a} < mn$.
    
    If $l = \max\{ i: \hat{N}_i > 0 \}$ and $\hat{N}_{i-1} < m$, consider the configuration
    \[ 
        \begin{cases}
            N_l^* = \hat{N}_l - 1 \\
            N_{l-1}^* = \hat{N}_{l-1} + 1 \\
            N_i^* = \hat{N}_i \text{ for all } i \notin \{ l,l-1\}
        \end{cases} 
    \]

    Clearly $N_i ^* \in\{0,\ldots , m\}$ for every $i\in\{1,\ldots,n\}$. Moreover, if we consider $R = \sum _{i = 1} ^n \hat{N}_i i - mn$, then $R \in \{ 0,\ldots , a-1 \}$. Furthermore,
    \[
        \sum _{i = 1} ^n N_i ^* i = mn + R - 1
    \] 

    If $R \geq 1$, then $R - 1 \geq 0$ and consequently, 
    \[ 
        \sum _{i = 1} ^n N_i ^* i \geq mn
    \]
    
    Since $l = \max\{ i: \hat{N}_i > 0 \}$, we have $R <a \leq l$ and therefore $R-l < 0$. Consequently,
    \[
        \sum _{i = 1} ^n N_i ^* i - (l-1) = mn + R - l  < mn
    \] 

    Since $N^*_{l-1} > 0$, all the constraints remain satisfied when $a^* = l-1$. Consequently, we obtain another optimal solution.

    For $R = 0$, suppose that there exist $p < l$ such that $N_p ^* < m$. We construct another configuration by setting
    \[ 
        \begin{cases}
            N_p' = N_p^* + 1 \\
            N_i ' = N_i^* \text{ for all } i \neq p
        \end{cases} 
    \]

    Clearly $N_i ' \in\{0,\ldots , m\}$ for every $i\in\{1,\ldots,n\}$. Moreover,
    \[
        \sum _{i = 1} ^n N_i ' i = \sum _{i = 1} ^n N_i ^* i + p = mn - 1 + p \geq mn 
    \] 
    
    Furthermore
    \[
        \sum _{i = 1} ^n N_i ' i - p = mn - 1 < mn 
    \]
    
    Therefore, all the constraints are satisfied. However, $\sum _{i = 1} ^n N_i '  > \sum _{i = 1} ^n N_i ^*$, which contradicts the optimality of the solution.

    Therefore, whenever $R\geq 1$, the previous procedure generates a new feasible solution with the same objective value and with $R' = R-1$. Repeating this process a finite number of times, we eventually obtain a feasible solution with $R=0$.

    In this case, as shown above, the solution can be written in the following form
    \[
        \begin{cases} 
            N_i=m, & i<K,\\ 
            N_K=q,\\ 
            N_j=0, & j>K
        \end{cases}
    \] 
    
    For some $K\in\{1,\ldots,n\}$ and $q\in\{1,\ldots,m\}$.
    
    \medskip
    \noindent\textbf{Claim.} Any feasible solution of the form described above is optimal.

    If a solution of this form is feasible, then 
    \[ 
        mn \leq \sum _{i=1} ^{K-1} N_i i + qK \leq mn + a, \text{ for some }  a \in \{ 1,\ldots , K \}
    \] 
    
    If it were not optimal, then there would exist another optimal solution $T = \sum _{i = 1} ^n \tilde{N}_i$ which could be expressed as
    \[
        \begin{cases}
            \tilde{N}_i=m, & i<\tilde{K},\\
            \tilde{N}_{\tilde{K}} = \tilde{q},\\
            \tilde{N}_j=0, & j>\tilde{K}
        \end{cases}
    \]
    
    For some $\tilde{K}\in\{1,\ldots,n\}$ and $\tilde{q}\in\{1,\ldots,m\}$.
    
    In fact, we must have $\tilde{K} \geq K$. Otherwise, if $\tilde{K} < K$, then
    \[ 
        \tilde{T} = m(\tilde{K}-1) + \tilde{q}\tilde{K} \leq m\tilde{K} \leq m (K-1) \leq m(K-1) + qK = T
    \] 
    
    Which contradicts the optimality of the solution.
    
    We consider two cases. First, suppose that $\tilde{K} = K$. Then $\tilde{q} > q$ and for every $\tilde{a}$ with $\tilde{N}_{\tilde{a}} > 0$ we have $\tilde{a} \leq K$. Therefore,
    \[ 
        \sum _{i=1} ^n \tilde{N}_i i - \tilde{a} = \sum _{i=1} ^{K-1} N_i i + qK + (\tilde{q}-q)K - \tilde{a} \geq  \sum _{i=1} ^{K-1} N_i i + qK \geq mn
    \]

    Which contradicts the feasibility of the solution.

    In the case $\tilde{K} > K$, let $\tilde{a}$ be such that $\tilde{N}_{\tilde{a}} > 0$. Then $\tilde{a} \leq \tilde{K}$, and therefore,
    \[ 
        \begin{aligned}
            \sum _{i=1} ^n \tilde{N}_i i - \tilde{a} & = \sum _{i=1} ^{K-1} N_i i + qK + (m-q)K + \sum _{i = K+1} ^{\tilde{K}} \tilde{N}_i i - \tilde{a} \\
            & = \sum _{i=1} ^{K-1} N_i i + qK + (m-q)K + \tilde{q}\tilde{K} - \tilde{a} \\
            & \geq \sum _{i=1} ^{K-1} N_i i + qK + \tilde{K} - \tilde{a} \\
            & \geq \sum _{i=1} ^{K-1} N_i i + qK \geq mn
        \end{aligned}
    \]

    Which contradicts the feasibility of the solution.

    For $T = \sigma(m,n)$, Proposition \ref{p:cota} and Theorem \ref{t:cota_no_triangular} yield a feasible solution of the form described above. Therefore, by the claim, the solution is optimal.
\end{proof}

\newpage
\bibliographystyle{plain}
\bibliography{references} 

@article{Lagarias2001,
  author = {Lagarias, Jeffrey C. and Rains, Eric M. and Vanderbei, Robert J.},
  title = {The Kruskal Count},
  journal = {arXiv Mathematics e-Prints},
  year = {2001},
  eprint = {math/0110143},
  archivePrefix = {arXiv},
  primaryClass = {math.PR}
}

@incollection{Lagarias2009,
  author = {Lagarias, Jeffrey C. and Rains, Eric M. and Vanderbei, Robert J.},
  title = {The Kruskal Count},
  booktitle = {The Mathematics of Preference, Choice and Order},
  editor = {Brams, Steven J. and Gehrlein, William V. and Roberts, Fred S.},
  series = {Studies in Choice and Welfare},
  publisher = {Springer},
  address = {Berlin, Heidelberg},
  pages = {371--391},
  year = {2009},
  doi = {10.1007/978-3-540-79128-7_23}
}

@book{Lindvall2002,
  author = {Lindvall, Torgny},
  title = {Lectures on the Coupling Method},
  publisher = {Dover Publications},
  year = {2002},
  address = {Mineola, NY}
}

@article{Pollard1978,
  author = {Pollard, John M.},
  title = {Monte Carlo Methods for Index Computation (mod p)},
  journal = {Mathematics of Computation},
  volume = {32},
  number = {143},
  pages = {918--924},
  year = {1978},
  doi = {10.2307/2006496}
}

@book{Ross1996,
  author    = {Sheldon M. Ross},
  title     = {Stochastic Processes},
  publisher = {Wiley},
  year      = {1996}
}

@book{Cinlar2013,
  author    = {Erhan Cinlar},
  title     = {Probability and Stochastics},
  publisher = {Springer},
  year      = {2013}
}

@book{Durrett2019,
  author    = {Rick Durrett},
  title     = {Probability: Theory and Examples},
  publisher = {Cambridge University Press},
  year      = {2019}
}

@book{elaydi2005difference,
  author    = {Saber N. Elaydi},
  title     = {An Introduction to Difference Equations},
  edition   = {3},
  year      = {2005},
  publisher = {Springer},
  address   = {New York},
  series    = {Undergraduate Texts in Mathematics},
  isbn      = {978-0387230597},
  doi       = {10.1007/b138539}
}

@book{kelley2001difference,
  author    = {Walter G. Kelley and Allan C. Peterson},
  title     = {Difference Equations: An Introduction with Applications},
  edition   = {2},
  publisher = {Academic Press},
  address   = {San Diego, CA},
  year      = {2001},
  isbn      = {978-0124045705}
}

@article{Dobrushin1967,
  author  = {Dobrushin, R. L.},
  title   = {Shannon's Theorems for Channels with Synchronization Errors},
  journal = {Problems of Information Transmission},
  volume  = {3},
  number  = {4},
  pages   = {11--26},
  year    = {1967}
}

@book{CoverThomas2006,
  author    = {Cover, Thomas M. and Thomas, Joy A.},
  title     = {Elements of Information Theory},
  publisher = {Wiley},
  edition   = {2},
  year      = {2006}
}

@article{IyengarSiegelWolf2011,
  author  = {Iyengar, A. R. and Siegel, P. H. and Wolf, J. K.},
  title   = {On Codes for Synchronization Errors},
  journal = {IEEE Transactions on Information Theory},
  volume  = {57},
  number  = {9},
  pages   = {5667--5679},
  year    = {2011}
}

@book{BerstelPerrinReutenauer2010,
  author    = {Berstel, Jean and Perrin, Dominique and Reutenauer, Christophe},
  title     = {Codes and Automata},
  publisher = {Cambridge University Press},
  year      = {2010}
}

@article{GilbertMoore1959,
  author  = {Gilbert, Edgar N. and Moore, Edward F.},
  title   = {Variable-Length Binary Encodings},
  journal = {Bell System Technical Journal},
  volume  = {38},
  number  = {4},
  pages   = {933--967},
  year    = {1959}
}

@article{Titchener1997,
  author  = {Titchener, Michael R.},
  title   = {The Synchronization of Variable-Length Codes},
  journal = {IEEE Transactions on Information Theory},
  volume  = {43},
  number  = {2},
  pages   = {513--522},
  year    = {1997}
}

@inproceedings{Higgie1993,
  author={Higgie, G.R.},
  booktitle={Proceedings. IEEE International Symposium on Information Theory}, 
  title={Self Synchronizing T-Codes to Replace Huffman Codes}, 
  year={1993},
  volume={},
  number={},
  pages={336-336},
  doi={10.1109/ISIT.1993.748652}}

@article{Banerjee2023,
  author={Banerjee, Anisha and Lenz, Andreas and Wachter-Zeh, Antonia},
  journal={IEEE Transactions on Communications}, 
  title={Sequential Decoding of Multiple Sequences for Synchronization Errors}, 
  year={2024},
  volume={72},
  number={11},
  pages={6660-6676},
  doi={10.1109/TCOMM.2024.3405322}}

@article{Trahtman2009,
  author  = {Trahtman, Avraham N.},
  title   = {The Road Coloring Problem},
  journal = {Israel Journal of Mathematics},
  volume  = {172},
  pages   = {51--60},
  year    = {2009}
}

@inproceedings{Trahtman2008,
  author    = {Trahtman, Avraham N.},
  title     = {Synchronizing Road Coloring},
  booktitle = {Theory and Applications of Models of Computation},
  series    = {Lecture Notes in Computer Science},
  year      = {2008}
}

@inproceedings{Dedic2007,
  author    = {Dedi{\'c}, Nenad and Jakubowski, Mariusz H. and Venkatesan, Ramarathnam},
  title     = {A Graph Game Model for Software Tamper Protection},
  booktitle = {Information Hiding},
  year      = {2007}
}

@inproceedings{CharlesGorenLauter2008,
  author    = {Charles, Denis X. and Goren, Eyal Z. and Lauter, Kristin E.},
  title     = {Cryptographic Hash Functions from Expander Graphs},
  booktitle = {Journal of Cryptology},
  volume     = {22},
  number     = {1},
  pages      = {93--113},
  year        = {2009}
}

@article{JaoMillerVenkatesan2009,
  author  = {Jao, David and Miller, Stephen D. and Venkatesan, Ramarathnam},
  title   = {Expander Graphs Based on GRH with an Application to Elliptic Curve Cryptography},
  journal = {Journal of Number Theory},
  volume  = {129},
  number  = {6},
  pages   = {1491--1504},
  year    = {2009}
}

\end{document}